\numberwithin{equation}{section}
\newcommand{\commentout}[1]{}
\newcommand{\R}{\mathbb{R}}
\newcommand{\T}{\mathbb{T}}
\newcommand{\beq}{\begin{equation}}
\newcommand{\eeq}{\end{equation}}
\newcommand{\ol}{\overline}
\newcommand{\wh}{\widehat}
\newcommand {\e}  {\varepsilon}
\newcommand {\ep}  {\epsilon}
\newcommand{\bea} {\begin{array}{rl}}
\newcommand{\eea} {\end{array}}
\newcommand{\bepa}{\left\{ \begin{array}{l}}
\newcommand{\eepa} {\end{array}\right.}
\newtheorem{theorem}{Theorem}[section]
\title{ {Homogenization of the backward-forward mean-field games systems in   periodic environments}}
 \author{
Pierre-Louis Lions$^{1}$ and Panagiotis E. Souganidis$^{2,3}$}
\begin{document}

\maketitle
\pagestyle{plain}
\pagenumbering{arabic}

\begin{center}{ \today}\end{center}

\begin{abstract}
We study the homogenization properties in the small viscosity limit and  in periodic environments of the (viscous) backward-forward mean-field games system. We consider separated Hamiltonians and provide results for systems  with (i) ``smoothing'' coupling and general initial and terminal data,  and (ii) with ``local coupling'' but well-prepared data.The limit  is a first-order forward-backward system. In the nonlocal coupling case,  the averaged system is of mfg-type, which is well-posed in some cases. For the problems with local coupling, the homogenization result is proved assuming that the formally obtained limit system has smooth solutions with well prepared initial and terminal data. It is also shown, using a very general example (potential mfg), that the limit system is not necessarily of mfg-type. 
\end{abstract}
\bigskip

{\bf Key words and phrases}  mean-field games, back-forward systems, periodic homogenization,  small noise limit, ergodic problem, two-scale weak convergence
\\
\\
 {\bf AMS Class. Numbers} 35B27, 35B40, 35K40, 35K59, 91A13
\bigskip
\section{Introduction}

This paper is the first step of a general program to study the homogenizing properties of mean-field games (for short mfg) set in self-averaging, for example,  periodic or stationary ergodic environments. 
\smallskip

In particular, we study  the homogenization properties,  in periodic environments, of the classical viscous backward-forward mfg-system

\begin{equation}\label{mfg1}
\begin{cases}
\partial_t u^\ep  - \ep \Delta u^\ep + H\left(Du^\ep, \dfrac{x}{\ep}\right) - F_\ep[m^\ep]=0 \ \text{in} \ \R^d\times (0,T) \quad  u^\ep(\cdot,0)=u_0, \\[2mm]
\partial_t m^\ep + \ep \Delta m^\ep + \text{div} \left[D_pH\left(Du^\ep, \dfrac{x}{\ep}\right) m^\ep\right]=0  \ \text{in} \ \R^d\times (0,\infty) \quad m^\ep(\cdot,T)=m_T.
\end{cases}
\end{equation}
which is the core of the mean field games theory without common noise.
\smallskip

In \eqref{mfg1},  $H=H(p,y):\R^d\times\R^d \to \R$ is periodic in  the second argument on the torus $\T^d$ and $D_pH$ is the derivative of $H$ with respect to its first argument. 
\smallskip

\smallskip



We discuss  next the forcing term $F_\ep$ in  \eqref{mfg1}. To present a unified setting, that is, to discuss at the same time both the cases of local and nonlocal (smoothing) dependence on the density, we assume that 
\beq\label{chi1}
F_\ep[m](x)=F(x, \dfrac{x}{\ep},m)
\eeq
where 
\beq\label{chi2}
F=F(x,y,m):\R^d\times\R^d\times C^2(\R^d) \to \R \ \text{is $\T^d$-periodic in $y$.} 
\eeq

When dealing with the local case, \eqref{chi2} follows from assuming that 
\beq\label{chi3}
F=F(x,y,m):R^d\times\R^d\times \R \to \R \ \text{is $\T^d$-periodic in $y$,}  
\eeq
in which case
\beq\label{chi4}
F_\ep[m](x)=F(x, \dfrac{x}{\ep},m(x)).
\eeq

When we consider the  nonlocal dependence on the density, for simplicity, we will omit the $y$-dependence of $F$ in 
\eqref{chi2}  and extend the domain of $F$ in \eqref{chi2}, that is, we assume that 
\beq\label{chi5}
F=F(x, m):\R^d\times \mathcal{P} \to \R,
\eeq
where $\mathcal{P}$ is the space of probability measures on $\R^d$. 
\smallskip

Most of the  results extend to mfg-systems with not separated Hamiltonians, that is $H$ that also depend on $m$. To keep, however,  the  notation and to explain the main ideas  in this note  we chose to work with \eqref{mfg1}.

\smallskip

Our main results are:
(i) Homogenization to a unique limit 
\eqref{mfg1} with nonlocal coupling and general data to an mfg-system.  
(ii) Homogenization for \eqref{mfg1} with local  coupling to a forward-backward system, when the latter has a smooth solution and \eqref{mfg1} has well-prepared data.  The limit system is not, however,  in general  of mfg-type.
\smallskip

In either case, the classical ansatz 
\beq\label{ansatz}
u^\ep(x,t)= \ol u(x,t) +\ep v(\frac{x}{\ep}) \ \  \text{and} \  \ m^\ep(x,t)=\ol m(x,t) \left( \mu(\frac{x}{\ep}) +\ep \nu (\frac{x}{\ep})\right) 
\eeq
with 
\beq\label{ansatz1}
 v, \mu, \nu: \R^d \to \R \ \text{are} \  \T^d-\text{periodic,} \ \mu>0 \ \text{and} \  \int_{\R^d} \mu(y) dy=1,
\eeq 
formally leads, for each $p\in \R^d$ and $m\in L^1(\R^d)$ or $m \in \R$,  to  the mfg-ergodic system (cell-problem) 

\begin{equation}\label{cell.1}
\begin{cases}
-\Delta v + H(Dv+p, y) - F[ m\mu]= \ol H(p,m,x) \ \text{in} \ \T^d, \\[1.5mm]
\Delta \mu + \text{div} \left[ D_p H(Dv+p,y)\mu\right]=0  \ \text{in} \ \T^d,\\[1.5mm]
\int_{\T^d} v(y) dy=0,  \  \mu > 0 \   \text{in} \ \T^d \ \text{and} \  \int_{\T^d} \mu(y) dy=1,
\end{cases}
\end{equation}
\smallskip

where $y=x/\ep$ and $F[m\mu]$ is defined as in \eqref{chi1}, \eqref{chi2} with $\ep=1$. Note that the dependence of $\ol H$ on $x$ is through  $F$.
\smallskip

It is known (see Lasry and Lions \cite{LL3}, Lions \cite{LCdF}, and Cardaliaguet, Lasry, Lions and Porretta \cite{CLLP1, CLLP2}) that, under the appropriate conditions on $H$ and $F$ to be introduced later, for each $x, p \in \R^d$ and $m: \R^d \to \R$ smooth, \eqref{cell}, there exists a unique  constant $\overline H(p,x,m)$ such that \eqref{cell.1}  has a unique $\T^d$-periodic solution $(v,m)=(v(\cdot; p, x, m), \mu(\cdot; p, x, m)) \in C^2(\R^d) \times L^1(\R^d).$
\smallskip

The homogenized system is then 
\begin{equation}\label{mfg2}
\begin{cases}
\partial_t \overline u + \overline H\left(D\overline u, x,  \overline  m \right)=0  \ \text{in} \ \R^d\times (0,T] \qquad  \overline u(\cdot,0)=u_0,\\[1.5mm]
 \partial_t \overline m + \text{div}\left[ \overline b(D\ol u, x, \ol m) \overline m\right ]=0 \ \text{in} \ \R^d\times [0, T) \qquad \overline m(\cdot,T)=m_T, 
\end{cases}
\end{equation}

where, for $p\in \R^d$ and $m\in L^1(\R^d)$  or $m\in \R$,
\begin{equation}\label{mfg2.1}
\ol b(p,x, m)=\int_{\T^d} D_pH(p + Dv(y; p,x,m),y) \mu (y; p, x, m) dy. 
\eeq
\vskip.075in

An important question  is whether \eqref{mfg2} is actually an mfg-system, that is, whether, for all $p\in \R^d$ and $m\in L^1(\R^d)$ or $m \in \R$,
\beq\label{mfg10}
\ol b(p, x, m)= D_p \overline H(p, x,  m).
\eeq

We state now in an informal way, that is, without  precise assumptions, the two main convergence results. The first is about the nonlocal setting and the second about the local one. 

\begin{theorem}\label{takis100} 
Assume that  \eqref{mfg1} is nonlocal, that is, $F_\ep$ is given by \eqref{chi4} and \eqref{chi5}.
Then, along subsequences $\ep \to 0$, locally uniformly in $\R^d\times [0,T]$ and in $L^1(\R^d\times (0,T))$, the solution $(u^\ep,m^\ep)$ of \eqref{mfg1} converges to a solution $(\ol u, \ol m)$ of \eqref{mfg2}, which is always of mfg-type. If, in addition,  \eqref{mfg1} satisfies the conditions that make it well-posed, then \eqref{mfg2} is also well-posed and the full family converges to the unique solution of \eqref{mfg2}. 
\end{theorem}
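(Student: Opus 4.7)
The plan is to carry out a two-scale / perturbed-test-function homogenization scheme adapted to the forward--backward mfg structure. First I would establish uniform $\ep$-independent a priori estimates on $(u^\ep,m^\ep)$. In the nonlocal smoothing regime, $F_\ep[m^\ep](x)=F(x,m^\ep(\cdot,t))$ is bounded and Lipschitz in $x$ uniformly in $\ep$ as soon as $m^\ep$ stays tight in $\mathcal{P}$, which turns the first equation into a viscous Hamilton--Jacobi equation with regular forcing and yields a uniform Lipschitz bound on $u^\ep$ via standard Bernstein-type estimates. Feeding this into the continuity equation, whose drift $D_pH(Du^\ep,x/\ep)$ is then uniformly bounded, one obtains uniform $L^1\cap L^\infty$ bounds and equicontinuity in time for $m^\ep$. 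Arzel\`a--Ascoli and Prokhorov then produce a subsequence along which $u^\ep\to\ol u$ locally uniformly on $\R^d\times[0,T]$, and $m^\ep\to\ol m$ narrowly in $\mathcal{P}$, uniformly in $t$.

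To identify the equation satisfied by $\ol u$, I would apply the perturbed test function method of Evans. At a point $(x_0,t_0)$ where $\ol u-\vp$ has a strict local extremum, I solve the cell problem \eqref{cell.1} at $p=D\vp(x_0,t_0)$, $x=x_0$, $m=\ol m(\cdot,t_0)$ for a corrector $v$, and use $\vp(x,t)+\ep v(x/\ep)$ as an admissible test function for $u^\ep$ near $(x_0,t_0)$. Continuity of $\ol H$ in all its arguments, which crucially relies on the smoothing nature of $F$ to handle the $m$-dependence, then lets me pass to the limit and conclude that $\ol u$ is a viscosity solution of $\p_t\ol u+\ol H(D\ol u,x,\ol m)=0$ with $\ol u(\cdot,0)=u_0$.

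For the density I would use two-scale weak convergence. The ansatz \eqref{ansatz}, together with uniqueness of the corrector $\mu(\cdot;p,x,m)$, suggests that $m^\ep$ two-scale converges to $\ol m(x,t)\mu(y;D\ol u(x,t),x,\ol m(\cdot,t))$. Testing the continuity equation against $\vp(x,t)\psi(x/\ep)$ for smooth $\vp$ and $\T^d$-periodic $\psi$, integrating by parts and sending $\ep\to 0$, yields, after integration in $y$, the homogenized continuity equation $\p_t\ol m+\mathrm{div}(\ol b(D\ol u,x,\ol m)\,\ol m)=0$ with $\ol m(\cdot,T)=m_T$ and $\ol b$ as in \eqref{mfg2.1}. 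The mfg-type identity $\ol b(p,x,m)=D_p\ol H(p,x,m)$ is then a consequence of the adjoint structure of \eqref{cell.1}: differentiating the first equation of \eqref{cell.1} in $p$, multiplying by $\mu$ and integrating on $\T^d$, the contribution of $Dv_p$ cancels against the second equation of \eqref{cell.1} after one more integration by parts, leaving precisely $\int_{\T^d}D_pH(Dv+p,y)\mu\,dy=D_p\ol H$.

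Finally, under Lasry--Lions monotonicity on $H$ and $F$, which is the assumption that yields well-posedness of \eqref{mfg1}, the smoothing nature of $F$ transfers this monotonicity to the averaged coupling inside $\ol H$, so that \eqref{mfg2} is itself an mfg-system with monotone coupling and is therefore well-posed; the subsequential limit is then unique and the whole family converges. The main obstacle I expect is the two-scale convergence step for $m^\ep$: the invariant density $\mu(\cdot;p,x,m)$ depends nonlinearly on $p$, $x$ and $m$, while $D\ol u$ is only Lipschitz and $\ol m$ only narrowly continuous in $t$, so justifying the composition requires a careful regularization of $\ol u$ and $\ol m$ combined with stability estimates for the cell problem \eqref{cell.1}, very much in the spirit of the corrector construction in the perturbed test function argument.
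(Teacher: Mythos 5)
Your overall architecture — Bernstein estimates for $u^\ep$, perturbed test functions à la Evans/LPV for the Hamilton--Jacobi part, two-scale convergence for $m^\ep$, and a duality between the $p$-derivative of the cell problem and the invariant measure to get $\ol b=D_p\ol H$ — matches the paper's, and you correctly flag the hard step. But the remedy you propose for that hard step (regularizing $D\ol u$ and $\ol m$ and invoking stability of the cell problem) is not what closes the argument, and it is unlikely to. The real issue is not the low regularity of $D\ol u$; it is that you need to pass to the two-scale limit in the \emph{nonlinear} quantity $D_pH(Du^\ep,x/\ep)m^\ep$, which requires knowing that $Du^\ep$ is close to $D\ol u+Dw(x/\ep)$ in an $m^\ep$-weighted strong sense. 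The locally uniform convergence of $u^\ep$ gives no control on $Du^\ep$ pointwise, and no amount of regularization of the limit fixes that.

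The paper's key idea, which your plan lacks, is to exploit the mfg \emph{duality/energy identity} at both the $\ep$-level and the limit level: pair the HJB equation with $m^\ep$ and the transport equation with $u^\ep$, then do the same pairing of the homogenized HJB equation with the two-scale limit $M$, and compare. Together with the two-scale limits of $H(D\ol u+Dw(x/\ep),x/\ep)m^\ep$ and of $D_pH(D\ol u+Dw(x/\ep),x/\ep)m^\ep\cdot(D\ol u+Dw)$, this produces the convexity defect
\begin{equation*}
\int_0^T\!\!\int_{\R^d}\Big[H\big(D\ol u+Dw(\tfrac x\ep),\tfrac x\ep\big)-H\big(Du^\ep,\tfrac x\ep\big)-\big(D_pH(Du^\ep,\tfrac x\ep),\,D\ol u+Dw(\tfrac x\ep)-Du^\ep\big)\Big]m^\ep\,dx\,dt\;\longrightarrow\;0,
\end{equation*}
and uniform convexity of $H$ then gives $\int_0^T\!\int_{\R^d}|D\ol u+Dw(x/\ep)-Du^\ep|^2\,m^\ep\,dx\,dt\to 0$. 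This is the step that lets one replace $Du^\ep$ by $D\ol u+Dw$ inside $D_pH$ and deduce $\widetilde M=D_pH(D\ol u+Dw,y)M$. Only after that does the $p$-differentiated cell equation, tested against $M$ together with the $y$-adjoint relation $\int(D_yv,\widetilde M)-\Delta_y v\,M=0$, yield $\int_{\T^d}D_pH(D\ol u+Dw,y)M\,dy=D_p\ol H(D\ol u)\,\ol m$. Note also that the paper never needs to identify $M=\ol m\mu$; the adjoint relation at the $y$-level suffices, which avoids exactly the nonlinear-composition problem you were worried about. Finally, for well-posedness, the relevant observation is sharper than "monotonicity transfers": since $F$ is independent of $y$, the cell problem decouples and $\ol H(p,m)=\ol h(p)-F(m)$ is \emph{separated}, with $\ol h$ convex; this is what makes the averaged system inherit the Lasry--Lions structure verbatim.
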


\smallskip
\begin{theorem}\label{takis1000}
Assume that  the coupling in \eqref{mfg1} is local, that is, $F_\ep$ satisfies  \eqref{chi1} and \eqref{chi2}. If \eqref{mfg2} has a smooth solution $(\ol u, \ol m)$ and \eqref{mfg1} has  well prepared initial and terminal data, then, as $\ep \to 0$, locally uniformly in $\R^d\times [0,T]$ and in $L^1(\R^d\times (0,T))$, the solution $(u^\ep,m^\ep)$ of \eqref{mfg1} converges to $(\ol u, \ol m)$. In general, \eqref{mfg2} is not an mfg-type system. 
\end{theorem}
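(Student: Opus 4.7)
The plan is to use an ansatz-plus-stability argument. Given the assumed smooth solution $(\ol u, \ol m)$ of \eqref{mfg2}, I would first build an approximate solution of \eqref{mfg1} by substituting the two-scale expansion \eqref{ansatz} with frozen parameters:
\beq
u^\ep_{app}(x,t) = \ol u(x,t) + \ep\, v\bigl(\tfrac{x}{\ep};\, D\ol u(x,t), x, \ol m(x,t)\bigr),
\eeq
\beq
m^\ep_{app}(x,t) = \ol m(x,t)\,\mu\bigl(\tfrac{x}{\ep};\, D\ol u(x,t), x, \ol m(x,t)\bigr) + \ep\, \ol m(x,t)\, \nu\bigl(\tfrac{x}{\ep};\, D\ol u(x,t), x, \ol m(x,t)\bigr),
\eeq
where $(v,\mu)$ solves the cell problem \eqref{cell.1} at parameters $(p,x,m) = (D\ol u(x,t), x, \ol m(x,t))$, and $\nu$ is a first-order corrector solving a suitable linearization of \eqref{cell.1} designed to kill the $O(1)$ divergence remainder in the continuity equation. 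By the smoothness of $(\ol u, \ol m)$ and standard differentiability of the cell-problem solutions in their parameters (classical in the uniformly elliptic mfg-ergodic setting), all correctors are smooth in the slow variables $(x,t)$.

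Next I would plug $(u^\ep_{app}, m^\ep_{app})$ into \eqref{mfg1}. The definitions of $\ol H$ in \eqref{cell.1} and $\ol b$ in \eqref{mfg2.1} eliminate the leading-order terms, and the choice of $\nu$ eliminates the $O(1)$ divergence discrepancy; what remains are source terms $R^\ep_u$, $R^\ep_m$ of size $O(\ep)$ in strong norms, controlled by $\|D^2\ol u\|_\infty$, $\|D\ol m\|_\infty$ and by bounds on the derivatives of $(v,\mu,\nu)$ in $(p,x,m)$. Well-prepared data means exactly that the traces $u^\ep(\cdot,0)-u^\ep_{app}(\cdot,0)$ and $m^\ep(\cdot,T)-m^\ep_{app}(\cdot,T)$ are $o(1)$ in appropriate norms.

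Setting $w^\ep = u^\ep - u^\ep_{app}$ and $r^\ep = m^\ep - m^\ep_{app}$, one obtains a nonhomogeneous linear forward-backward system obtained by linearizing \eqref{mfg1} around $(u^\ep_{app}, m^\ep_{app})$. I would then run the Lions--Lasry duality computation at the $\ep$-level: multiply the linearized HJ-type equation by $r^\ep$, the linearized continuity equation by $w^\ep$, subtract, and integrate over $\R^d\times[0,T]$. The convexity of $H$ in $p$ and the monotonicity of $F$ in $m$ (assumed in the local case) produce a nonnegative bulk quadratic form in $(Dw^\ep, r^\ep)$ that absorbs the contribution of the $O(\ep)$ residuals via Cauchy--Schwarz, and the well-prepared data make the boundary contributions $o(1)$. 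A Gr\"onwall-type argument then yields $\|w^\ep\|+\|r^\ep\|\to 0$, which gives the stated convergence.

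The main obstacle is precisely that \eqref{mfg2} need not be of mfg-type, so the limit system does not itself admit a Lasry--Lions monotonicity identity; the duality estimate must therefore be run at the pre-limit level, where the monotonicity still holds, and the error analysis has to accommodate the fact that the correctors $(v,\mu,\nu)$ carry a slow $(x,t)$-dependence through $(D\ol u, \ol m)$. The resulting cross terms involve $D^2\ol u$, $D\ol m$, and derivatives of the ergodic correctors with respect to the parameters $(p,x,m)$; controlling them is what forces both the smoothness assumption on $(\ol u, \ol m)$ and the well-preparedness of the data. An additional delicate point is handling the possibility that $\ol m$ vanishes or nearly vanishes on part of the domain, which must be addressed either through a strict positivity assumption built into the smooth limit or by localizing the duality estimate to the support of $\ol m$.
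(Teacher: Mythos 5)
Your proposal for the convergence statement takes essentially the same route as the paper: an approximate solution built from the two-scale ansatz with slowly modulated correctors $\tilde v(\tfrac{x}{\ep};D\ol u,\ol m)$, $\tilde\mu(\tfrac{x}{\ep};D\ol u,\ol m)$, a Fredholm construction of the second corrector $\nu$ to kill the $O(1)$ residual in the continuity equation, and the Lasry--Lions duality identity run at the $\ep$-level using convexity of $H$, monotonicity of $F$, and strict positivity of $\ol m$; the only minor slip is that the duality identity, integrated over $[0,T]$ with vanishing well-prepared boundary terms, closes the estimate directly and no Gr\"onwall step is needed. Note also that your proposal does not address the theorem's second claim that \eqref{mfg2} is not in general of mfg-type, which the paper establishes separately via the potential-mfg representation and the fact that $\Gamma$-limits of variational problems with separated Lagrangians need not have separated effective Hamiltonians.
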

%
%
\smallskip

A particular case of Theorem~\ref{takis1000} is when, for some $p\in \R^d$, 
\beq\label{well}
u_0(x)= (p,x)  \ \text{and} \ m_T\equiv1.
\eeq
In this case, $(\ol u, \ol m)$ with  $\ol u(x,t)= (p,x) - t\ol H(p,1)$ and $m\equiv 1$ solves $\eqref{mfg2}$, and an argument similar to the proof of Theorem~\ref{takis100} gives that, as $\ep \to 0$ and locally uniformly in $\R^d\times [0,T]$ and  in $L^1(\R^d\times (0,T))$, 
\beq\label{well1}
u^\ep(x,t) \to (p,x) - \ol H(p,1) \ \text{ and} \  m^\ep \to 1.
\eeq

Finally, we note that after the results of this paper were obtained and announced, the authors became aware of a  work by Cesaroni, Dirr and Marchi \cite{CDM} which is about  a special class of  \eqref{mfg1} with quadratic $H$ and set in $\T^d\times [0,T]$.

\subsection*{Background} The theory of mean-field games was introduced  about a decade ago with by Lasry and Lions \cite{LL3}, who developed the fundamental  elements of the mathematical theory, and, independently, by  Huang, Malham\'e, and Caines  \cite{HMC2}  who considered  a particular class of mfg.  Since then,  the subject has grown  rapidly  both in terms of theory and applications. 
Some of the landmark theoretical results include the introduction of the so-called ``Master equation,''  the theory of ``monotone hyperbolic systems'' for finite state space models, and the ``Hilbertian'' approach for infinite dimensional problems.  References for these developments are the courses of 
Lions at  College de France \cite{LCdF}, which are available on line, and  the forthcoming books of Carmona and  Delarue \cite{CD1, CD2}.  
A partial and by no means complete  list of references of the earlier work in this general area  includes (in alphabetical order)   Achdou, Buera, Lasry, Lions and Moll \cite{ABLLM}, Achdou, Giraud, Lasry and Lions \cite{AGLL},  Cardalliaguet, Delarue, Lasry and Lions \cite{CDLL}, Cardaliaguet, Lasry, Lions and Porretta \cite{CLLP1, CLLP2}, Carmona and Delarue \cite{CD3}, Gabaix, Lasry, Lions and Moll \cite{GLLM}, Gu\'eant, Lasry and Lions \cite{GLL}, Huang, Caines and Malham\'e \cite{HMC1}, Huang, Malham\'e, and Caines  \cite{HCM1, HCM2, HCM3},  Lachapelle, Lehalle, Lasry and Lions \cite{LLLL}, 
and Lasry and Lions \cite{LL1, LL2, LL4}. 

\vskip.1in

\noindent  Applications that have been so far looked at  range from  complex socio-economical topics, regulatory financial issues,  crowd movement,  meaningful ``big'' data and advertising   to engineering contexts involving ``decentralized intelligence'' and machine learning;  two concrete examples being  fleets of automated cars  and future telecommunication networks. 
\vskip.1in
 
 \noindent  At the beginning,  MFG models were introduced to describe the behavior of a large group or several groups of agents using a mean field approach as in statistical physics. Such models can be derived rigorously from $N$-players systems as $N$ tends to infinity.  In this context, an agent is someone trying to optimize  certain criteria which, together with its dynamics, depend on the other agents and their actions. It is important to emphasize that the agents  react,  anticipate and strategize instead of simply reacting instantaneously. The latter is, for example,  the case in many agent-based models or ones derived from statistical mechanics and/or kinetic considerations.  
\vskip.1in
 
 \noindent  MFG are the ideal mathematical structures to study the quintessential problems in the social-economical sciences, which differ from physical settings because of the  forward looking behavior on the part of individual agents. Concrete examples of applications in this direction include  the modeling of the macroeconomy and  conflicts in the modern era. In both cases, a large number of agents interact strategically in a stochastically evolving environment, all responding to partly common and partly idiosyncratic incentives, and all trying to simultaneously forecast the decision of others.

\subsection*{Future work} This note is the first in a series of works that will provide a systematic homogenization theory for mean-field games. Among others,  we will consider general initial and terminal data for \eqref{mfg1}, we will study the well-posedness of the limit system when it is not of mfg-type, and, finally, we will consider extensions to 
random media as well as to mfg with variational structure and even  common noise. 

\subsection*{Organization of the paper}  In section~2 we discuss the ansatz,  we derive formally both the cell problem and the homogenized systems, and state the result about the solvability of the cell problem.  In section~3 we concentrate on the nonlocal problem.  In section~4, we study the local problem. We show that it homogenizes to a system which is not always of mfg-type. 
%

\subsection*{Notation} Throughout the paper $C_c(\R^d)$ denotes the space of compactly supported continuous  functions on $\R^d$. When more regularity is needed, we simply say smooth functions in $C_c(\R^d)$.
We also write 
$C_{c,p}(\R^d \times \T^d)$ for the space of continuous functions which are compactly supported in the first argument and $\T^d$-periodic in the second, and we use the same convention when more regularity is needed. Finally, $C_b(U;V)$ is the space of continuous bounded functions defined on $U$ and values in $V$. 

\subsection*{Acknowledgment} The first author was  partially supported by the Air Force Office for Scientific Research  grant FA9550-18-1-0494 and the Office for Naval Research grant N000141712095.
smallskip

The second author was   partially supported by the National Science Foundation grant DMS-1600129, the Office for Naval Research grant N000141712095 and the Air Force Office for Scientific Research grant FA9550-18-1-0494.

\section{The assumptions}

As far as $H:\R^d\times\R^d \to \R$ is concerned, throughout the paper we assume that 

\beq\label{periodic}
\text{$y \to H(p,y)$ is periodic in $\T^d$  for each $p \in \R^d$,}
\eeq
\beq\label{convex}
\text{$p\to H(p,y)$ is uniformly convex  in $p$ for each $y\in \T^d$,} 
\eeq 
%
and 
\beq\label{bound}
\text{$H$ and $D_pH$ are uniformly bounded in $y$ for $p$ bounded in $\R^d$;}
\eeq
%
%
%

\smallskip

The assumptions on $F$ and the joint dependence of $H$ and $F$ in $x$ and $y$ depend on the type of the coupling we consider.

\subsection*{The nonlocal coupling}
Throughout the discussion of the mfg-systems with nonlocal coupling,  we assume that  the coupling $F_\ep$ is independent of $\ep$, that is 
\beq\label{2.1}
F_\ep[m](x)=F[m](x)=F(x,m)
\eeq
where $F: \R^d\times\mathcal{P} \to \R$ is
%
%
\beq\label{FLip}
\text{$x\to F(x,m)$ is Lipschitz continuous uniformly in $x\in \R^d$ and $m\in \mathcal{P}$,}
\eeq
and
\beq\label{cm0}
\text{ $m\to F(x, m)$ is  
continuous  with respect to the  weak topology of measures uniformly in $x$}
\eeq
that is, 
\beq\label{cm}
\text{if}  \ 
m^\ep \underset{\ep\to 0 }\rightharpoonup m  \  \text{in \ the sense of measures, then, uniformly in $x$, $F(x, m^\ep) \underset{\ep\to 0 } \to F(x, m)$.}
\eeq

In order to obtain gradient bounds for $u^\ep$ in \eqref{mfg1}, it is necessary to assume some additional conditions on $H$ and $F$.  As far as the latter is concerned, we assume that 
\beq\label{2.2}
m \to F[m] \ \text{is differentiable in $m$ and $F'[m]$ is smooth and bounded in $\mathcal{P}$}
\eeq
that is,
\beq\label{2.3}
\|D_xF'[m]\|_\infty + \|D^2_xF'[m]\|_\infty \ \text{is bounded in $\mathcal{P}$.}
\eeq
We also need to assume that 
%
%
%
%
%
%
\beq\label{LIP}
\begin{cases}
\text{there exists $\theta \in (0,1)$ such that, for $\ep \in (0,1)$, large $|p|$ and $m \in \mathcal{P}$,}\\[2mm]
\underset{x\in \R^d, \ y \in \T^d}\inf \left \{ \theta H^2(p,y) +  d \left(D_y H(p,y), p \right ) -\ep d   \left(D_x F(x,m), p\right) \right \} >0. 
\end{cases}
\eeq

As far as  the well-posedness of \eqref{mfg1} goes, it was shown in \cite{LL3, LCdF} that \eqref{mfg1} has a unique solution,  if,  either

\beq\label{mon}
\text{ $m \to F[m]$ is monotone in $\mathcal{P}$,}
\eeq
that is, 
\beq\label{mon1}
\int_{\R^d} \left(F(x, m_1) -F(x, m_2)\right) (m_1(x)-m_2(x))dx\geq 0 \ \text{for all} \ m_1, m_2\in \mathcal{P}, 
\eeq
and 
\beq\label{sconvex}
H \ \text{is strictly convex in $p$ uniformly in $y\in \T^d$ },
\eeq
that is, 
\[\text{if} \ H(p+q,y) - H(p,y) - \left( D_pH(p,y), q\right) =0, \ \text{then} \ q=p,\]
or 
\beq\label{smon}
\text{ $F$ is strictly monotone in $\mathcal{P}$,}
\eeq
that is, 
\beq\label{smon1}
\text{if}  \ \int_{\R^d} \left(F(x, m_1)-F(x,m_2)\right) (m_1-m_2)(x)dx= 0, \ \text{then} \ m_1=m_2.
\eeq

\smallskip

%
%

\subsection*{The local coupling} When the coupling is local, that is
\[F_\ep[m](x)=F(x,\dfrac{x}{\ep},m(x)),\]
with $F$ as in \eqref{chi3} and 
\beq\label{local}
F \in C^2(\R^d\times \R^d\times \R).
\eeq
Since for the result we assume that the formally derived limiting system has smooth we do not need  uniform in $\ep$ Lipschitz bounds on $u^\ep$, and, hence, assumptions like  \eqref{2.2}, \eqref{2.2} and \eqref{LIP}.
\smallskip

For the well-posedness of \eqref{mfg1}, it is necessary, as in the nonlocal case, to assume that 
\beq\label{2.8}
H \ \text{is strictly convex and, for all $\ep>0$} 
\eeq
and 
$F_\ep$ is strictly monotone, that is, there exists $c>0$ such that, for each $\ep>0$ and any smooth functions $m_1, m_2$,
\beq\label{takis100}
\int_{\R^d} (F_\ep[m_1](x)-F_\ep[m_2](x))(m_1(x)-m_2(x)) dx\geq c \int_{\R^d} |m_1(x)-m_2(x)| dx.
\eeq

%
%
In view of the form of $F_\ep$, we explain next a condition that $F$ in \eqref{chi3} must satisfy in order for $F_\ep$ to be monotone in the sense of \eqref{takis100}. For simplicity we assume that $c=0$ in \eqref{takis100}. 

\smallskip
Recall that $F_\ep$ is monotone for each fixed $\ep>0$, if, for  all smooth $f_1, f_2:\R^d\to \R$, 
\beq\label{chi11}
\int_{\R^d} \left(F(x,\dfrac{x}{\ep}, f_1(x))-F(x,\dfrac{x}{\ep}, f_2(x))\right)(f_1(x)-f_2(x)) dx \geq 0.\eeq
A trivial approximation argument then yields  that \eqref{chi11} implies that, for each $\ep>0$, and all balls $B\subset \R^d$ and $f_1, f_2 :\R^d \to \R$ smooth and $\T^d$-periodic,
\[\int_{\R^d} \left(F(x,\dfrac{x}{\ep}, f_1(\dfrac{x}{\ep})\chi_B(x))-F(x,\dfrac{x}{\ep}, f_2(\dfrac{x}{\ep})\chi_B(x))\right)(f_1(\dfrac{x}{\ep})-f_2(\dfrac{x}{\ep}))\chi_B(x) dx \geq 0,\]
where $\chi_B$ is the characteristic function of $B$.
\smallskip

It follows, after letting $\ep\to0$, that, for all balls $B$ in $\R^d$ and  $f_1, f_2 :\R^d \to \R$ smooth and $\T^d$-periodic, that $F$ in \eqref{chi3} must satisfy
\[\int_B\int_{\T^d}(F(x,y,f_1(y))-(x,y,f_2(y))(f_1(y)-f_2(y))dy dx\geq 0,\]
and, hence, for all $x\in \R^d$ and $f_1, f_2 :\R^d \to \R)$ smooth and $\T^d$-periodic,
\beq\label{2.15}
\int_{\T^d}(F(x,y,f_1(y))-(x,y,f_2(y))(f_1(y)-f_2(y))dy dx\geq 0.
\eeq

\section{The mfg-cell problem and derivation of the averaged system}

The aim here is to use the ansatz \eqref{ansatz} to derive formally  the mfg-cell problem \eqref{cell}. For its rigorous analysis, that is, existence of ergodic constant and correctors, under the assumptions we introduce in the next two sections, we refer to \cite{LL3, LCdF}. 
\smallskip

The formal argument relies on  combining facts  from  the theory of the homogenization of viscosity solutions and the two scale convergence--see Lions, Papanicolaou and Varadhan \cite{LPV}), Allaire \cite{A}, Nguetseng \cite{N} and Goudon and Poupaud \cite{GP}.
\smallskip

Inserting the assumed expansion for $u^\ep$ in the first equation of \eqref{mfg1}, expanding in $\ep$, writing $y$ for $x/\ep$ and recording only the $\ep^0$-order term in the formal expansion in powers of $\ep$ leads to 
\[\partial_t\ol u -\Delta_y v + H(D_x\ol u +D_y v, y) -F(x,y, \ol m \mu)=0\]
and, hence, to 
\[\partial_t\ol u +\ol H(D\ol u,\ol m, x)=0,\]
where, for each $p\in \R^d$ and $\ol m\in R$, $v=v(y;p, \ol m)$ and $\mu=\mu(y; p,\ol m)$,
\beq\label{eq1}
-\Delta v + H(p+Dv,y)- F(x, y,\ol m \mu)=\ol H(p,\ol m, x).
\eeq
Note that although the dependence in $p$ and $m$ is separated in the cell problem, the effective Hamiltonian is not necessarily separated. 
\smallskip

Inserting the expansion for $m^\ep$ in the second equation of \eqref{mfg1}, and again ``expanding'' in powers of $\ep$ yields and using the previous notation we find as coefficients of $\ep^{-1}$ and  $\ep^0$ respectively 
the equations
\beq\label{takis200}
\Delta_y \mu +\text{div}_y\left[D_pH(D_x \ol u +D_y v, y) \mu\right]=0,
\eeq
and
\beq\label{takis201}
\begin{split}
&\ol m \Big(\Delta_y \nu +\text{div}_y\left[D_pH(D_x \ol u +D_y v, y)\nu\right]\Big)\\[2mm]
&= \mu \partial_t\ol m + \text{div}_x\big[D_pH(D_x \ol u +D_y v, y) \mu \big] +2 \left(D_x \ol m, D_y \mu\right). 
\end{split} 
\eeq
Rewriting \eqref{takis200} with $p\in \R^d$ in place of $D_x\ol u(x,t)$ and $\ol m \in \R$ in place of $\ol m(x,t)$
we find
\[\Delta_y \mu +\text{div}_y\left[D_pH(D_x \ol u +D_y v, y) \mu\right] =0,\]
which is the second equation of \eqref{cell}.
\smallskip

The existence of $\nu$ in \eqref{takis201} follows  from  Fredholm's alternative provided that the integral of the right hand side of \eqref{takis201}  over $\T^d$ vanishes. This requirement together with the normalization 
$\int_{\T^d} \mu(y)dy=1,$
give the transport equation of the homogenized system, namely
\[\partial_t \ol m + \text{div}_x\big[ \int_{\T^d} \left [D_pH\left(D\ol u + D_yv \right)\mu\right] dy  \ \ol m \big]=0.\]

We continue with the nonlocal setting, which, in view of the assumption that the forcing term $F_\ep$ is independent of $x/\ep$, decouples. Indeed, the classical periodic homogenization theory, implies that, in view of  \eqref{periodic} and    \eqref{convex}, for each $p\in \R^d$, there exists a unique $\ol H(p)$ such that 
\[-\Delta v + H(Dv+p, y)=\ol H(p),\]
has a unique up to constants periodic solution. 
\smallskip

It then follows that, for  every $p\in \R^d$, $x\in \R^d$ and $m \in \mathcal{P}$,   there exists a unique constant 
$\ol H(p,m, x)=\ol H(p) - F(x,m)$ and a unique pair $(v, \mu) \in C^2(\R^d) \times L^1(\T^d)$, which are $\T^d$-periodic and 
\begin{equation}\label{cell}
\begin{cases}
-\Delta v + H(Dv+p, y) -F(x, m)=  \ol H(p,m,x) \ \text{in} \ \T^d,\\[1.5mm]
\Delta \mu + \text{div} \left[ DH(Dv+p,y)\mu\right]=0  \ \text{in} \ \T^d,\\[1.5mm]
\int_{\T^d} v(y) dy=0,  \  \mu > 0  \  \text{in} \ \T^d \ \text{and} \  \int_{\T^d} \mu(y) dy=1.
\end{cases}
\eeq
 
The local cell-problem
\begin{equation}\label{cell.1}
\begin{cases}
-\Delta v + H(Dv+p, y) -F(y, m\mu) = \ol H(p,m) \ \text{in} \ \T^d,\\[1.5mm]
\Delta \mu + \text{div} \left[ DH(Dv+p,y)\mu\right]=0  \ \text{in} \ \T^d,\\[1.5mm]
\int_{\T^d} v(y) dy=0,  \  \mu > 0  \  \text{in} \ \T^d \ \text{and} \  \int_{\T^d} \mu(y) dy=1,
\end{cases}
\end{equation}
is a reparametrization of systems that have already solved in the literature, see, for example, \cite{LL3} and \cite{LCdF}, where we refer to for the details.

\section{Homogenization for mfg-systems with nonlocal coupling}

In addition to the conditions on $H$ in section~2, here we  assume that 
\begin{equation}\label{m1}
\text{there exist constants} \ C>c>0 \ \text{such that} \  c\leq \int_{\R^d}  m_T(y)dy \leq C,
\end{equation}
and 
\beq\label{u0}
u_0  \ \text{is  Lipschitz continuous.} 
\eeq

It follows from  \eqref{m1} and \eqref{u0} and the assumptions on $H$ and $F$ that, for some constants $C, c>0$,  
\beq\label{takis400}
 \underset{t\in [0,T]}\sup \|Du^\ep(\cdot,t)\| \leq C \ \  \text{and} \ \   c \leq \underset{t\in [0,T]}\inf \int_{\R^d} m^\ep(x,t) dx \leq \underset{t\in [0,T]}\sup \int_{\R^d} m^\ep(x,t) dx \leq C.
 \eeq
Since the bounds on $\int_{\R^d} m^\ep(x,t) dx $ are classical,  here we only discuss the Lipschitz estimate on $u^\ep$.
\smallskip

In view of \eqref{2.2}, \eqref{2.3} and \eqref{LIP}, such bounds  follow from a Bernstein-type argument, see, for example,  Lions and  Souganidis \cite{LS10}, provided it is shown that there exists $C>0$ such that, for all sufficiently small $\ep>0$,
\beq\label{chi20}
\|\partial_t u^\ep\|_\infty \leq C.
\eeq

Since it is immediate from that first equation of \eqref{mfg1} that, for all $t\in [0,T]$, 
\[\|\partial_t u^\ep(\cdot, t)\|_\infty \leq \|\partial_t u^\ep(\cdot, T)\|_\infty +\int_0^T \|\partial_s F_\ep[m^\ep] \|_\infty ds \leq \|\partial_t u^\ep(\cdot, T)\|_\infty +\int_0^T \|<F_\ep'[m^\ep], \partial_s m^\ep>\|_\infty ds,\]
it is enough to obtain a bound for the quantity in the last integral above. 
\small

This is where the regularity assumptions \eqref{2.2} and \eqref{2.3}. Indeed differentiating the second equation in \eqref{mfg1} in time and using \eqref{2.2} and \eqref{2.3} we find that $\|<F_\ep'[m^\ep], \partial_s m^\ep>\|_\infty$ is controlled by $\|D_pH(Du^\ep, \cdot)\|_\infty$, which,  in view of \eqref{bound} is bounded. 
\smallskip

We state next the homogenization result, which also implies that the limiting system is of mfg-type. 
\begin{theorem}\label{main1}
Assume that \eqref{mfg1} is nonlocal, that is, \eqref{2.1}, as well as \eqref{periodic}, \   \eqref{convex}, \  \eqref{bound},  \ \eqref{FLip}, \ \eqref{cm0},  \ \eqref{2.2}, \eqref{2.3},  \ \eqref{LIP}, \ \eqref{m1}.   
 and   \eqref{u0}.
Then there exists a Hamiltonian $\ol H=H(p,m):\R^d\times \mathcal{P} \to \R$, which satisfies \eqref{convex},  \eqref{bound}, \eqref{cm0} and \eqref{m1}, such that, as  $\ep\to 0$, 
$u^{\ep} \to \overline u$ locally uniformly in $\R^d$  and $m^{\ep}  \rightharpoonup \overline m$ in $L^\infty_t(L^1_x(\R^d))$, where  $(\overline u, \overline m)$ is the unique solution   \eqref{mfg2} with $\ol b$ satisfying \eqref{mfg10}.
\end{theorem}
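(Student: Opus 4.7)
The plan is to combine the uniform a priori bounds \eqref{takis400} with the perturbed test function method for the Hamilton--Jacobi equation, two-scale weak convergence for the Fokker--Planck equation, and the Lasry--Lions monotonicity argument for the uniqueness of the limit. The key simplification in the nonlocal setting is that $F$ is independent of the fast variable $y=x/\ep$, so the cell problem decouples and $\ol H(p,x,m)=\ol H_0(p)-F(x,m)$, where $\ol H_0$ is the effective Hamiltonian coming from the standard first equation of \eqref{cell} with $F\equiv 0$.

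First I would use \eqref{takis400} and Arzel\`a--Ascoli to extract a subsequence $u^\ep\to \ol u$ locally uniformly in $\R^d\times [0,T]$ with $\ol u$ Lipschitz in $x$ and, through \eqref{chi20}, also in $t$. For $m^\ep$, the uniform mass bound combined with the uniform bound on the drift $D_pH(Du^\ep,\cdot)$ coming from \eqref{bound} yields $m^\ep$ bounded in $L^\infty([0,T];L^1(\R^d))$ and $\p_t m^\ep$ bounded in a negative Sobolev space, giving up to a subsequence $m^\ep\rightharpoonup \ol m$ weak-$*$ in $L^\infty_t(L^1_x)$ together with a two-scale weak limit $m_0(x,t,y)$ in the sense of Nguetseng--Allaire. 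Time-continuity of the weak limit follows from the $t$-Lipschitz control, so $F(x,m^\ep(\cdot,t))\to F(x,\ol m(\cdot,t))$ uniformly in $(x,t)$ by \eqref{cm0}.

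Next, for the Hamilton--Jacobi equation I would apply the perturbed test function method, using the corrector $v(\cdot;p)$ of the standard cell problem (with $F$ absorbed as a datum). Because in the nonlocal case the forcing $F(x,m^\ep)$ is independent of $y$, its full strong convergence is immediate from the previous step, and the viscosity-solution limit is $\p_t\ol u+\ol H_0(D\ol u)-F(x,\ol m)=0$, which gives the Hamilton--Jacobi equation of \eqref{mfg2} with $\ol H$ in separated form. For the continuity equation I would test against $\phi(x,t)+\ep\,\psi(x,t,x/\ep)$ with $\phi,\psi$ smooth and $\T^d$-periodic in the last variable, and pass to the two-scale limit. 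The $\ep^{-1}$ cancellation forces $m_0(x,t,y)=\ol m(x,t)\,\mu(y;D\ol u(x,t))$ with $\mu$ the unique invariant density from the second line of \eqref{cell}, and the $\ep^0$ identity produces $\p_t\ol m+\dv\!\bigl[\ol b(D\ol u)\ol m\bigr]=0$ with $\ol b$ as in \eqref{mfg2.1}. The main technical obstacle here is justifying the strong two-scale identification of the nonlinear drift $D_pH(Du^\ep,x/\ep)m^\ep$; I would handle it by testing the HJ equation against $u^\ep$ itself and the Fokker--Planck equation against $v(x/\ep;D\ol u)$, subtract, and use strict convexity \eqref{sconvex} in a Minty-type argument to upgrade the weak two-scale convergence of $Du^\ep$ to $D\ol u+D_yv(\cdot;D\ol u)$ in the measure $\mu(y)dy\otimes dxdt$.

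Finally, the mfg-identity $\ol b(p)=D_p\ol H_0(p)$ is classical for the standard cell problem: differentiating the corrector equation for $v$ in $p$, multiplying by $\mu$, integrating over $\T^d$, and using the adjoint invariance equation for $\mu$ together with $\int_{\T^d}\mu=1$ produces exactly \eqref{mfg10} in the decoupled form $D_p\ol H(p,x,m)=D_p\ol H_0(p)=\ol b(p)$. Hence the limit system \eqref{mfg2} is of mfg-type. Well-posedness of \eqref{mfg2} then follows in either of the Lasry--Lions regimes: under \eqref{mon}--\eqref{sconvex}, monotonicity of $F$ is inherited by the limit and is paired with the strict convexity of $\ol H_0$ (itself inherited from \eqref{sconvex} by the standard convex-duality representation of $\ol H_0$); under \eqref{smon}, strict monotonicity passes directly to the limit. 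Either case gives uniqueness of $(\ol u,\ol m)$, and the standard subsequence argument then upgrades the convergence to the full family $\ep\to 0$.
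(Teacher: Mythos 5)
Your approach is essentially the same as the paper's: uniform a priori bounds (\ref{takis400}), two-scale convergence of $m^\ep$ and of $D_pH(Du^\ep,x/\ep)m^\ep$, perturbed test function for the Hamilton--Jacobi equation, a Lasry--Lions duality pairing between the two equations exploiting convexity to identify the two-scale limit of the drift, and finally differentiation of the cell problem in $p$ combined with the adjoint invariance equation to derive \eqref{mfg10}. Two imprecisions are worth flagging, though neither is fatal. First, your Minty step appeals to strict convexity \eqref{sconvex}; what the theorem actually assumes and what the paper's argument requires is uniform convexity \eqref{convex}, which is what turns the convergence \eqref{takis11} into the quantitative $L^2$ estimate \eqref{takis12} for $D\ol u + Dw(\cdot/\ep) - Du^\ep$ weighted by $m^\ep$; \eqref{sconvex} alone gives only positivity of the convexity gap, not a quadratic modulus, and would not produce the strong convergence the subsequent Lipschitz-of-$D_pH$ argument relies on. Second, the order of your deductions for the continuity equation is reversed: the $\ep^{-1}$ cancellation yields only the weak adjoint relation between the two-scale limits $M$ and $\widetilde M$ (equation \eqref{takis4}), and the product form $M(x,y,t)=\ol m(x,t)\,\mu(y;D\ol u(x,t))$ is available only \emph{after} identifying $\widetilde M = D_pH(D\ol u + D_yw,y)M$ through the convexity argument; stating the factorization before establishing the drift identification, as your sketch does, hides the main technical content rather than resolving it.
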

Recall that  $(\ol u,\ol m)$ solves \eqref{mfg2} if $\ol u$ is a viscosity solution of the Hamilton-Jacobi equation and 
$\ol m$ a distributional solution of the transport equation in  in \eqref{mfg2}. 
\smallskip

We continue with the proof.

\begin{proof}[The proof of Theorem~\ref{main1}] The arguments are  
based on a combination of the viscosity-type homogenization theory for the Hamilton-Jacobi-Bellman equation in \eqref{mfg1} (see \cite{LPV}) and the double scale limit-method for the transport equation in \eqref{mfg1} (see  \cite{A},  \cite{N} and  \cite{GP}).  An additional argument is needed in the end to connect the two steps by showing the particular form of  the limiting transport equation.
\smallskip

 Throughout the proof all the limits $\ep\to 0$  will be taken along subsequences, a fact which will not be repeated. In addition, to keep the formulae shorter we often omit the explicit dependence of $u^\ep, Du^\ep$ and $m^\ep$ on $(x,t).$ 
 \smallskip
 
 We begin with the two-scale convergence argument.  In view of \eqref{takis400} and the assumptions on $H$, it follows that 
 the families  $(m^\ep)_{\ep>0}$ and $(D_pH(Du^\ep, \dfrac{\cdot}{\ep})m^\ep)_{\ep>0}$  is are equibounded n $L^1(\R^d \times [0,T])$.
\smallskip

Moreover, a simple calculation also yields that $(m^\ep)_{\ep>0}$ is weakly (in time) equicontinuous,  that is, for all
$\phi \in C_c(\R^d)$, the map $t\to \int_{\R^d} m^\ep(x,t) \phi(x) dx$ is continuous uniformly on $\ep$. 
 
\smallskip

The two-scale convergence theory yields that, along subsequences,  $(m^\ep)_{\ep>0}$ 
and $(D_pH(Du^\ep, \dfrac{\cdot}{\ep})m^\ep)_{\ep>0}$ ``double scale'' converge 
respectively to 
 $M: \R^d\times \R^d\times [0,\infty) \to \R$ and $\widetilde M=(\widetilde M_1, \ldots,\widetilde M_d): \R^d\times \R^d\times [0,\infty) \to \R^d$.  
\smallskip

It follows that, as $\ep\to 0$,  for every interval $I\subset (0,\infty)$,  
every smooth and $\T^d$-periodic  with respect to its second argument $\phi:\R^d\times \R^d\times [0,T]\to \R$ and for $i=1,\dots,d$,
\beq\label{takis2}
\int_0^T\int_{\R^d} m^\ep (x,t) \phi(x, \dfrac{x}{\ep}, t) dx dt \to 
\int_0^T\int_{\R^d}\int_{\T^d} M(x,t) \phi(x, y, t) dx dy dt, 
\eeq
\beq\label{takis2.1}
\int_0^T\int_{\R^d}H_{p_i}(Du^\ep, \dfrac{x}{\ep})m^\ep  \phi(x, \dfrac{x}{\ep}, t) dx dt \to 
\int_0^T\int_{\R^d}\int_{\T^d} \widetilde M_i(x,t) \phi(x, y, t) dx dy dt,
\eeq
and, moreover,
\beq\label{takis3}
m^\ep \underset{\ep\to 0 }\rightharpoonup \overline m=\ol m(x,t)=\int_{\T^d}  M(x,y,t) dy  \ \ \text{in} \ \  L^1(\R^d\times (0,T)).  
\eeq
\smallskip


In view of \eqref{u0},  \eqref{cm} and  \eqref{takis3} standard arguments from the theory of periodic homogenization of  Hamilton-Jacobi equations (see, for example, Lions, Papanicolaou and Varadhan~\cite{LPV}), yield that, along subsequences, the family $(u^\ep)_{\ep>0}$ converges locally uniformly to a solution of 
\beq\label{takis1.1}
\partial_t \overline u + \overline H(D\overline u) - F(x,\ol m)=0 \ \text{in} \ \R^d\times (0,T] \ \text{and} \ \overline u(\cdot,0)=u_0,
\eeq
where, for each $(p,m)\in \R^d \times  L^1(\R^d\times (0,T))$,   $\overline H=\overline H(p,m)$ is the effective Hamiltonian, that is,  unique constant such that the cell problem
\beq\label{cell}
-\Delta w + H(Dw+p,y)=\overline H(p) \ \text{in} \ \T^d
\eeq
has a periodic smooth solution, often referred to  as the corrector, which is unique if it is normalized by 
\beq\label{cell1}
 \int_{\T^d}w(y)dy =0. 
\eeq

Multiplying the transport equation in \eqref{mfg1} by $\phi(x) +\ep v(x, \frac{x}{\ep})$, where $\phi \in C^2_c(\R^d)$ and $v\in C^2_{c,p}(\R^d \times \T^d),$ and integrating over $\R^d$ we find 
\beq\label{takis3.1}
\begin{split} \dfrac{d}{dt} \int_{R^d} m^\ep \left[ \phi(x) + \ep  v(x, \frac{x}{\ep})\right] dx -\int_{R^d} \left( D_pH(Du^\ep,\frac{x}{\ep})m^\ep, D\phi(x)  +\left[\ep D_x +D_y]v(x, \frac{x}{\ep})\right]\right) dx \\
 =-\int_{R^d} m^\ep \left(\ep \Delta \phi (x) +[ \ep^2 \Delta_x + 2\Delta_{xy} +\Delta_y]v(x,\frac{x}{\ep})\right) dx.
\end{split}\eeq

Letting  $\phi\equiv 0$ in   \eqref{takis3.1} and using \eqref{takis2.1} yields,  after letting $\ep\to0$ and integrating over $[0,T]$, that, for all $v\in C^2_{c,p}(\R^d \times \T^d)$, 
\beq\label{takis4}
\int_0^T\int_{\R^d}\int_{\T^d} [(D_yv , \widetilde M) - \Delta_y v M] dx dy dt=0.
\eeq
Similarly, if $v\equiv 0$ in \eqref{takis3.1}, letting $\ep \to 0$ and using \eqref{takis3}, we get 
\beq\label{takis5}
\partial_t m  + \text{div}\int_{\T^d} \widetilde M(x,y,t)dy=0.
\eeq

To conclude we need to show that 
\beq\label{takis130.1}
\int_{\T^d} \widetilde M(x,y,t) dy = D_p \ol H(D\ol u) \int_{\T^d} M(x,y,t) dy=D_p \ol H(D\ol u)\ol m.
\eeq

We combine next \eqref{takis1.1} and \eqref{takis5} in the usual way, that is, we multiply \eqref{takis1.1} by $M$ and \eqref{takis5} by $u$ and integrate in $x$ and $y$. It follows that 
\beq\label{takis6}
\dfrac{d}{dt}\int_{\R^d}\int_{\T^d} \overline u(x,t) M(x,y,t)dy dx + \int_{\R^d}\int_{\T^d} \overline [\overline H(D\overline u)M -F(x,\overline m) M -(D\overline u , \widetilde M)]dydx=0.
\eeq

A similar caIculation at the $\ep$-level, which also classical in the mfg-theory yields, for each $\ep >0$, 
\beq\label{takis7} 
\dfrac{d}{dt} \int_{\R^d} u^\ep m^\ep dx +  \int_{\R^d} \left[H(Du^\ep, \frac{x}{\ep}) - F(x, m^\ep) -\Big(Du^\ep, D_pH(Du^\ep, \frac{x}{\ep})\Big)\right] m^\ep]dx=0.
\eeq 

The local uniform and weak convergence of $u^\ep$ and $m^\ep$ respectively give that, as $\ep\to 0$,
\[\int_0^T \int_{\R^d}u^\ep m^\ep dx dt \to \int_0^T \int_{\R^d}\int_{\T^d} \overline u M dydxdt.\]
Combining this last fact with  \eqref{takis6} and \eqref{takis7} we find, again as $\ep\to 0$, 
\beq\label{takis8}
\begin{split}
\int_0^T\int_{\R^d} \left[\Big( H(Du^\ep, \frac{x}{\ep}) -F(x, m^\ep)\Big)m^\ep-\Big(Du^\ep, D_pH(Du^\ep, \frac{x}{\ep})m^\ep \Big) \right] dx dt \to \\[2mm]
\int_0^T\int_{\R^d}\int_{\T^d} \left[\left(\overline H(D\overline u) -F(x,\overline m) \right)M -D\overline u \cdot \widetilde M \right] dydx.
\end{split}
\eeq
It also follows from the  definition of the  two-scale limit that, as $\ep\to0$,
\beq\label{takis9}
\int_0^T\int_{\R^d} H(D\overline u+Dw(\frac{x}{\ep}), \frac{x}{\ep})m^\ep dx dt \to \int_0^T\int_{\R^d} \int_{\T^d}
H(D\overline u +Dw)M dy dx dt,
\eeq
and 

\beq\label{takis10}
\int_0^T\int_{\R^d} \Big(D_pH(D\overline u+Dw(\frac{x}{\ep}), \frac{x}{\ep})m^\ep, (D\overline u + Dw(\frac{x}{\ep}) )\Big) \to   \int_0^T\int_{\R^d} \int_{\T^d}
\left(\widetilde M,(D\overline u+Dw)\right) dy dx dt.
\eeq
\smallskip

Combining all the previous facts  we find that, as $\ep \to 0$,
\beq\label{takis11}
\begin{split}
\int_0^T\int_{\R^d}  \Big[H(D\overline u &+ Dw(\frac{x}{\ep}),\frac{x}{\ep})-H(Du^\ep,\frac{x}{\ep})\\[2mm]
&-\left( D_pH (Du^\ep,\frac{x}{\ep}), D\overline u + Dw(\frac{x}{\ep}) -Du^\ep \right ) \Big] m^\ep dx dt \to 0.
\end{split}
\eeq
The uniform convexity of $H$ and \eqref{takis11} then  yield that, as $\ep\to 0$,
\beq\label{takis12}
\int_0^T\int_{\R^d}  \big|D\overline u +Dw(\frac{x}{\ep})-Du^\ep\big |^2 m^\ep dx dt \to 0,
\eeq
and, hence, as $\ep \to 0$, 
\beq\label{takis13}
\int_0^T\int_{\R^d}  \Big|D\overline u +Dw(\frac{x}{\ep})-Du^\ep\Big| m^\ep dx dt \to 0.
\eeq

\smallskip
The last claim follows from the observation that, if, for some $C>0$,   $|G^\ep| \leq C$ in $\R^d\times [0,T]$ and, as $\ep\to 0$, 
\begin{equation*}
\int_0^T\int_{\R^d} |G^\ep(x,t)|^2 m^\ep(x,t) dxdt \to 0,
\end{equation*}
then 
\begin{equation*}
\begin{split}
\int_0^T\int_{\R^d} |G^\ep(x,t)| m^\ep(x,t) dxdt& \leq \int_0^T\Big[\int_{\R^d} |G^\ep(x,t)|^2 m^\ep(x,t) dx\Big]^{1/2}\Big[\int_{\R^d}m^\ep(x,t)dx\Big]^{1/2} dt\\[2mm]
& \leq \int_0^T\Big[\int_{\R^d} |G^\ep(x,t)|^2 m^\ep(x,t) dx\Big]^{1/2} dt \underset{\ep\to 0} \to 0.
\end{split}
\end{equation*}
The Lipschitz continuity of$DH$ and \eqref{takis13} then give  that, as $\ep\to 0 $, for all  $\phi \in C^2_{c, p}(\R^d\times \T^d)$ and $i=1,\ldots,d$,
\beq\label{takis14}
\int_0^T\int_{\R^d} \left[H_{p_i}(Du^\ep,\frac{x}{\ep})- H_{p_i}(D \overline u + Dw(\frac{x}{\ep}), \frac{x}{\ep})\right]m^\ep  \phi(x,\frac{x}{\ep}) dx dt \to 0. 
\eeq
Using  $H_{p_i}(D \overline u + Dw(y), y)\phi (x,y,t)$ as test function in the double-scale limit of $m^\ep$  we find that, as $\ep \to0$, for all $\phi \in C_{c, p}(\R^d\times \T^d)$ and $i=1,\ldots,d$, 
%
%
%
\[\int_0^T\int_{\R^d}[H_{p_i}(Du^\ep,\frac{x}{\ep})m^\ep \phi(x,\frac{x}{\ep},t)dxdt \to \int_0^T\int_{\R^d}\int_{\T^d} H_{p_i}(D\overline u +D_y w, y) M(x,y,t)\phi(x,y,t) dxdydt,\]
and, hence, in view of \eqref{takis2.1}, 

\beq\label{takis15}
\widetilde M=D_p H(D\overline u +Dw, y) M. 
\eeq

It follows then follows from   \eqref{takis5} and \eqref{takis14} that 
 
\beq\label{takis16}
\partial_t \overline m + \text{div}_x\left(\int_{\T^d} D_pH(D\overline u + Dw, y)M(x,y,t) dy\right )= 0 \  \text{in} \ \R^d\times [0,T). 
\eeq

It remains to show that

\beq\label{takis410}
\int_{\T^d} D_pH(D\overline u + Dw, y)M(x,y,t)dy= \overline H_{p_i}(D\overline u) \overline m.\eeq

Differentiating \eqref{cell} with respect to $p$ we find, 
 for each $i=1,\ldots, d$,
 \beq\label{takis16.1}
 -\Delta w_{p_i} + D_pH (p+D_y w, y) \cdot Dw_{p_i} + H_{p_i}=\overline H_{p_i} \  \text{in} \ \R^d.
 \eeq

The argument above can be justified by taking difference quotients in \eqref{cell} and passing in the limiting using the 
continuity properties of the ergodic constant.  We leave the details to the reader.
\smallskip

Let $p=D\overline u(x,t)$ in \eqref{takis16.1}, multiply by $\phi M$ with $\phi \in C_c(\R^d \times [0,T])$, integrate over $\R^d \times \T^d \times [0,T]$ and use \eqref{takis4} to get 
\[ \int_0^T\int_{\R^d}\int_{\T^d} \phi(x) H_{p_i}(D_x \overline u +D_y w, y) M dydxdt=\int_0^T\int_{\R^d}\int_{\T^d} \overline H_{p_i}(D\overline u)  Mdydxdt,\] 
hence, \eqref{takis410} and \eqref{takis130.1} hold, and, thus 

\[\partial _t \overline m+ \text{div} (D_p \overline H(D\overline u)\overline m)=0  \  \text{in} \ \R^d\times [0,T). \]

\end{proof}

We turn next to the well-posedness.

\begin{proof}[The proof of Theorem~\ref{wp1}]
The fact that $F$ is independent of $y$  and the uniqueness of the ergodic constant $\ol H$ yield that, for each $p\in \R^d$ and $m\in L^1(\R^d)$,  
\beq\label{takis300}
\ol H(p, m)= \ol h(p) - F(m),
\eeq
where $\ol h(p)$ is the unique constant for which the cell problem
\[-\Delta_y w + H(p+D_y w, y)=\ol h(p) \ \text{in} \ \R^d\]
has a $\T^d$-periodic solution.
\smallskip

It is well known, see, for example, \cite{LPV}, that the convexity of $H$ implies the convexity of $\ol h$. Since $F$ is the same as in \eqref{mfg1},  $\ol H$ has the properties needed for the limit mfg-system to have a unique solution.

\end{proof}

\section{Homogenization for mfg-systems with local coupling}

When dealing with \eqref{mfg1} with local coupling we loose the regularizing property which allowed  in the previous section to pass the weak limit of the $m^\ep$'s in the nonlinearity, which, in turn, essentially decoupled the cell-problem system.  
\smallskip 

Instead of introducing conditions that lead in some cases to independent of $\ep$ apriori bounds on $(u^\ep, m^\ep)$, here we concentrate on proving a homogenization result for \eqref{mfg1} assuming that the limit system has a classical solution, which is true when the  horizon $T$ using is small. 
\smallskip

We work with well-prepared initial and terminal conditions, that is, we assume that 
\beq\label{wp}
u^\ep_0(x)=\ol u_0(x) +\ep v(\frac{x}{\ep})  \ \ \text{and} \ \  m^\ep_T(x)=\ol m_T(x)(\mu(\frac{x}{\ep}) + \ep \nu(\frac{x}{\ep}),
\eeq
where $\ol u_0$ and $\ol m_T$ satisfy respectively \eqref{u0} and \eqref{m1} and $v, \mu$ and $\nu$ are constructed from $\ol u_0$ and $\ol m_T$  as in the course fo the proof of Theorem~\ref{local2}.

\begin{theorem}\label{local2} Assume that \eqref{mfg1} is local, that is, \eqref{chi4} and \eqref{chi5}, and, in addition,  \eqref{periodic}, \eqref{convex}, \eqref{bound}, \eqref{local}, either \eqref{2.8} or \eqref{2.9}   and fix 
$\ol u_0 \in C^2_b(\R^d) \cap H^1(\R^d)$ and $\ol m_T \in C^2_b(\R^d) \cap L^1(\R^d)$ satisfying respectively \eqref{u0} and \eqref{m1}. Moreover, assume  that the system \eqref{mfg2} with initial and terminal condition $\ol u_0$ and $\ol m_T$ respectively and  $\ol H$ as in \eqref{cell1} and $\ol b$ given by \eqref{mfg2.1} has a classical solution $(\ol u,\ol m)$. Finally, let $(u^\ep, m^\ep)$ be the solution of \eqref{mfg1} with initial and terminal condition $\ol u^\ep_0$ and $\ol m^\ep_T$ as in \eqref{wp}.   Then, as $\ep\to 0$, $u^\ep \to \ol u$  in $H^1(\R^d\times (0,T))$ and   $m^\ep \to \ol m$ in $ L^1(\R^d)$.     
\end{theorem}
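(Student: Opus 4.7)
The plan is to compare the true solution $(u^\ep,m^\ep)$ against an explicit two-scale approximation built from the cell-problem profiles and the assumed smooth homogenized pair $(\ol u,\ol m)$, with the comparison carried out via the Lasry--Lions duality identity.

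First, at each $(x,t)\in\R^d\times[0,T]$ I solve \eqref{cell.1} with frozen parameters $p=D\ol u(x,t)$ and $m=\ol m(x,t)$, producing smooth $\T^d$-periodic profiles $v(x,t,\cdot)$ and $\mu(x,t,\cdot)$. The next-order density corrector $\nu(x,t,\cdot)$ is constructed from \eqref{takis201}: its Fredholm compatibility condition is exactly the homogenized transport equation in \eqref{mfg2}, which holds by hypothesis. Joint smoothness of $v,\mu,\nu$ in $(x,t,y)$ follows from the smooth dependence of $\ol H$, $v$, and $\mu$ on the parameters $(p,m,x)$, a consequence of the strict monotonicity/convexity assumed and the $C^2$-dependence of the local coupling $F$, combined with the regularity of $(\ol u,\ol m)$. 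I then set
\[
\tilde u^\ep(x,t)=\ol u(x,t)+\ep v(x,t,x/\ep),\qquad \tilde m^\ep(x,t)=\ol m(x,t)\bigl(\mu(x,t,x/\ep)+\ep\nu(x,t,x/\ep)\bigr);
\]
by \eqref{wp} these agree with $u^\ep$ at $t=0$ and with $m^\ep$ at $t=T$.

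Second, plugging $(\tilde u^\ep,\tilde m^\ep)$ into \eqref{mfg1}, the $\ep^{-1}$ and $\ep^0$ contributions cancel by construction of the cell problem and the homogenized equation, so the residuals
\[
E_1^\ep=\p_t\tilde u^\ep-\ep\Delta\tilde u^\ep+H(D\tilde u^\ep,x/\ep)-F_\ep[\tilde m^\ep],\qquad E_2^\ep=\p_t\tilde m^\ep+\ep\Delta\tilde m^\ep+\dv\bigl[D_pH(D\tilde u^\ep,x/\ep)\,\tilde m^\ep\bigr]
\]
are of order $\ep$ in the natural norms, with decay at infinity inherited from the integrability of $(\ol u,\ol m)$. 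Next I perform the standard mfg duality computation on $t\mapsto I(t):=\int_{\R^d}(u^\ep-\tilde u^\ep)(m^\ep-\tilde m^\ep)dx$. Using the PDEs (with residuals on the tilded side), integrating by parts so that the $\ep\Delta$ terms cancel, and collecting the algebraic identities for $H$, I obtain
\[
\tfrac{d}{dt}I(t)=\int_{\R^d}\Bigl[A(D\tilde u^\ep,Du^\ep)m^\ep+A(Du^\ep,D\tilde u^\ep)\tilde m^\ep+(F_\ep[m^\ep]-F_\ep[\tilde m^\ep])(m^\ep-\tilde m^\ep)\Bigr]dx-\int_{\R^d}\bigl[E_1^\ep(m^\ep-\tilde m^\ep)+(u^\ep-\tilde u^\ep)E_2^\ep\bigr]dx,
\]
with $A(p,q):=H(p,\cdot)-H(q,\cdot)-D_pH(q,\cdot)\cdot(p-q)\ge 0$. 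Integrating from $0$ to $T$, the boundary values $I(0)$ and $I(T)$ vanish exactly by \eqref{wp}, and the sum of non-negative terms on the right equals the small residual integrals.

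Third, uniform convexity of $H$ yields $A(p,q)\ge c|p-q|^2$, and strict monotonicity \eqref{takis100} produces the lower bound $c\int|m^\ep-\tilde m^\ep|dx$ for the coupling term. The right-hand side is bounded by $\|E_1^\ep\|_\infty(\|m^\ep\|_{L^1}+\|\tilde m^\ep\|_{L^1})+\|u^\ep-\tilde u^\ep\|_\infty\|E_2^\ep\|_{L^1}=o(1)$ in view of the apriori $L^1$ bound on $m^\ep$ from \eqref{m1}, giving
\[
\int_0^T\!\!\int_{\R^d}|m^\ep-\tilde m^\ep|\,dxdt\to 0,\qquad \int_0^T\!\!\int_{\R^d}(m^\ep+\tilde m^\ep)|Du^\ep-D\tilde u^\ep|^2\,dxdt\to 0.
\]
Combined with $\tilde u^\ep\to\ol u$ uniformly and $\tilde m^\ep\to\ol m$ in $L^1_{x,t}$, this delivers the claimed convergences.

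I expect the main obstacle to be the regularity step: producing $C^2_{x,t}$-smooth profiles $v,\mu,\nu$ whose $x$- and $t$-derivatives are controlled uniformly, so that the residuals $E_i^\ep$ really are $O(\ep)$. This requires differentiating the nonlinear system \eqref{cell.1} in the parameters $(p,m,x)$ and using implicit-function-type stability for $\ol H$, $v$, $\mu$, which is available under the strict convexity/monotonicity hypotheses (see \cite{LL3,LCdF}) and is precisely where the assumption that $(\ol u,\ol m)$ be a \emph{classical} solution enters. A secondary technical point is justifying the global integration by parts without uniform Lipschitz bounds on $u^\ep$; this can be handled by localization exploiting the decay of $\tilde m^\ep$ inherited from $\ol m$, or by first extracting an $H^1$-type bound on $u^\ep-\tilde u^\ep$ from the duality identity itself.
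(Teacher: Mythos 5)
Your proposal follows essentially the same route as the paper: build the two-scale approximate solution $(\widehat u^\ep,\widehat m^\ep)$ from the cell-problem profiles evaluated at $(p,m)=(D\ol u,\ol m)$, construct the next-order density corrector $\tilde\nu$ via Fredholm's alternative (whose solvability condition is the homogenized transport equation), show the residuals are $O(\ep)$ in $L^\infty$ and $L^1$ respectively, and close with the Lasry--Lions cross-duality identity and the vanishing boundary terms from the well-prepared data \eqref{wp}. Your two flagged technical concerns (smooth parameter-dependence of the correctors, and justifying the global integration by parts) are precisely the points the paper also treats briefly, relying on differentiating \eqref{cell} in its parameters and on the positive lower bound for $\widehat m^\ep$ inherited from $\ol m$ and $\mu$.
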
 
\begin{proof}
The proof is based on constructing, using the regularity of  $(\ol u, \ol m)$ and the ansatz \eqref{ansatz}, an approximate solution of \eqref{mfg1}, which is very close to $(u^\ep, m^\ep)$.  
\smallskip

The assumption that on the initial and terminal conditions of \eqref{mfg1} allows for a significant simplification of the proof, which, nevertheless is tedious. 
\smallskip

We assume that the solution  $(v(\cdot; p, m), \mu(\cdot; p,m))$ of \eqref{cell} depends smoothly on $(p,m)$, introduce the functions $\tilde v$ and $\tilde \mu$ given by 
\[\tilde v(y,x,t)= v(y; D\ol u(x,t), \ol m(x,t)) \ \ \text{and} \ \   \tilde \mu (x,y,t)= \mu(y; D\ol u(x,t), \ol m(x,t)),\] 
and claim that there exists a smooth $\tilde \nu: \T^d \times \R^d\times [0,T] \to \R$ such that 
\beq\label{takis50}
\widehat u^\ep(x,t)=\ol u(x,t) + \ep \tilde v(\frac{x}{\ep}, x,t) \ \ \text{and} \ \  \widehat m^\ep(x,t)= \ol m(x,t)\left(\tilde\mu (\frac{x}{\ep}, x,t) + \tilde \nu(\frac{x}{\ep}, x,t)\right)
\eeq

is a solution of  \eqref{mfg1} up to an error, which is small in a sense that will become clear in the course.
\smallskip

Notice that 
\[u_0(x)= \widehat u^\ep (x,0)  \ \text{and}  \ m_T(x)=   \widehat m^\ep(x,T).\]

We begin with the first equation in \eqref{mfg1}. In what follows, we use that 
\[ \partial_t \ol u+ \ol H(D\ol u, \ol m)=0 \ \ \text{and} \ \  -\Delta_y v + H(D\ol u + D_y v, y0- F(y,  \ol m \mu)= \ol H(D\ol u, \ol m)\]
and omit, whenever it does not create confusion,  the dependence of all functions on their arguments. 
\smallskip

Inserting the formula for $\wh u^\ep$ in the Bellman equation of \eqref{mfg1} we find 
\smallskip

\begin{equation*}
\begin{split}
&\partial_t \wh u^\ep-\ep \Delta \wh u^\ep +H(D \wh u^\ep, \frac{x}{\ep}) - F(\frac{x}{\ep},\wh m^\ep)=
\partial_t \ol u + \ep \partial_t \tilde v - \ep \Delta \ol u 
-  \left( \Delta_y\tilde v + 2\ep \Delta_{x,y} \tilde v
+\ep^2 \Delta_x \tilde v \right)\\[2mm]
& + H\left(D\ol u + D_y \tilde v  + \ep D_x \tilde v,\frac{x}{\ep}\right) -F\left (\frac{x}{\ep}, \ol m ( \tilde\mu+ \ep \tilde \nu)\right) = \partial_t \ol u + \ol H(D\ol u, \ol m) \\[2mm]
&- \Delta_y v(\frac{x}{\ep}; D\ol u, \ol m) + H\left( D\ol u +D_y v(\frac{x}{\ep}; D\ol u, \ol m),  \frac{x}{\ep}\right) -F\left(\frac{x}{\ep}, \ol m \mu(\frac{x}{\ep};  D\ol u, \ol m(x,t)\right) - \ol H(D\ol u, \ol m)= A^\ep(x,t), 
\end{split}
\end{equation*}
where 
\beq\label{takis51}
\begin{split}
&A^\ep(x,t) =  \ep \partial_t \tilde v -\ep \Delta \ol u - 2 \ep \Delta_{x,y} \tilde v - \ep^2 \Delta_x \tilde v + H\left(D\ol u + D_y v + \ep D_x \tilde v,\frac{x}{\ep}\right) \\[2mm] 
&  -F\left(\frac{x}{\ep}, \ol m( \tilde\mu+ \ep \tilde \nu)(\frac{x}{\ep}, x,t)\right) 
 - H\left( D\ol u +D_y v,  \frac{x}{\ep}\right) + F\left(\frac{x}{\ep}, \ol m \mu)\right);
\end{split}
\eeq
note that above  we used  the fact  that $D_y \tilde v= D_y v$. 
\smallskip

The assumptions on $H$ and $F$ and the regularity of $\ol u$ and $\ol m$ yield  $A_1^\ep \in C_b( \R^d\times \R; \R^d)$ and 
$A_2^\ep \in C_b( \R^d\times \R;  \R)$ such that 
\[ 
\begin{split}
&H\left(D\ol u +(D_y  + \ep D_x) \tilde v,\frac{x}{\ep}\right) - F\left(\frac{x}{\ep}, \ol ( \tilde\mu+ \ep \tilde \nu)(\frac{x}{\ep}, x,t)\right)=\\[2mm]
& H\left( D\ol u +D_y v(\frac{x}{\ep}; D\ol u, \ol m),  \frac{x}{\ep} \right) -F\left(\frac{x}{\ep}, \ol m \mu(\frac{x}{\ep};  D\ol u, \ol m) \right) + 
\ep \left( A^\ep_1, D_x \tilde v\right) + \ep A^\ep_2 \tilde \nu,
\end{split}
\]
and, hence, for some $C>0$,
\[|A^\ep|\leq C \ \text{on} \ \R^d\times [0,T],\]

that is, 
\beq\label{takis52}
\|\partial_t \wh u^\ep-\ep \Delta \wh u^\ep +H(D \wh u^\ep, \frac{x}{\ep}) -F(\frac{x}{\ep}, \wh m^\ep)\|_ {L^\infty(\R^d\times [0,T])} \leq C \ep.
\eeq

Justifying the above is a long  but nevertheless routine calculus exercise, which requires knowing 
that $\tilde \nu$ as well as $D_p v, D_m v, D_p \mu$ and $D_m\mu$ are uniformly bounded. The existence of such 
$\tilde \nu$ is discussed below. The other bounds follow from differentiating \eqref{cell} with respect to $p$ and $m$ and studying the resulting problems. We leave these details up to the reader.
\smallskip

The argument for the second equation of \eqref{mfg1} is slightly more complicated since it involves the two scale convergence. To simplify the notation, we find it necessary to have some preliminary discussion and to introduce some useful notation.  As before, when it does not create confusion, we omit the explicit dependence on the variables.
\smallskip

We begin with the observation that the independent of $\ep$ bounds on $D_x \tilde v$ and $D_x\tilde \mu$ imply that 
%
%
there exist $b^\ep_1, b^\ep_2 \in C^1_b(\R^d\times [0,T]; \R^{d})$ with $C^1$-bounds independent of $\ep$  such that 
\beq\label{takis60}
D_pH(D\wh u^\ep, \frac{x}{\ep})=D_pH(D_x \ol u + D_y v + \ep D_x\tilde v, \frac{x}{\ep})
= D_pH (D\ol u + D_y v, \frac{x}{\ep}) + \ep b^\ep_1  +  \ep^2 b^\ep_2. 
\eeq

We return now to the second equation of \eqref{mfg1}. In what follows, to simplify the notation we write $\mu$ and $\nu$ in place of $\tilde \mu$ and $\tilde \nu$.  We find that 
\[
\begin{split}
&\partial_t \wh m^\ep + \ep \Delta \wh m^\ep + \text{div} \big[ D_p H(D\wh u^\ep) \wh m^\ep]=\\[1mm]
& \partial_t \ol m ( \mu +\ep  \nu) + \ol m \partial_t ( \mu +\ep  \nu) + \ep \Delta_x \ol m ( \mu +\ep  \nu) +  \ol m (\frac{1}{\ep} \Delta_y + 2 \Delta_{xy} + \ep \Delta_x)(\mu +\ep \nu) + 2 \left( D_x\ol m, D_y( \mu +\ep  n)\right) \\[1.5mm]
& + 2\ep  \left( D_x\ol m, D_x( \mu +\ep  n)\right)+  \frac{1}{\ep}\text{div}_y \big[ D_p H(D \ol u + D_y v,  \frac{x}{\ep})\ol m \mu\big] + \text{div}_x \big[ D_p H(D \ol u + D_y v,  \frac{x}{\ep}) \ol m \mu \big]  \\[2mm]
& + \text{div}_y \big[ D_p H(D \ol u + D_y v,  \frac{x}{\ep})\ol m \nu\big] + \ep \text{div}_x \big[ D_p H(D \ol u + D_y v,  \frac{x}{\ep})\ol m  \nu \big] + \text{div}_y [b^\ep_1 \ol m  \mu]  \\[2mm]
& + \ep \text{div}_x [b^\ep_1 \ol m  \mu] + \ep\left(\text{div}_y [b^\ep_1 \ol m \nu] +\ep \text{div}_x [b^\ep_1 \ol m  \nu]\right)+ \ep \left(\text{div}_y [b^\ep_2 \ol m \mu] +\ep \text{div}_x [b^\ep_2 \ol m  \mu]\right) \\[2mm]
& + \ep^2 \left(\text{div}_y [b^\ep_2 \ol m \nu] +\ep \text{div}_x [b^\ep_2 \ol m  \nu]\right).
\end{split}
\]

Reorganizing  the identities above yields
\[
\partial_t \wh m^\ep + \ep \Delta \wh m^\ep + \text{div} \big[ D_p H(D\wh u^\ep) \wh m^\ep] 
= \frac{1}{\ep}B^\ep_1 + B^\ep_2 + \ep B^\ep,
\]
where
\[
B^\ep_1 =  \ol m \left(\Delta_y \mu + \text{div}_y \big[ D_p H(D \ol u + D_y v,  \frac{x}{\ep}) \mu\big] \right), \]
\[
\begin{split}
B^\ep_2&= \ol m \big(\Delta_y \nu + \text{div}_y \big[ D_p H(D \ol u + D_y v,  \frac{x}{\ep}) \nu\big] + \partial_t \mu + 2 \Delta_{xy} \mu + 2 \left( D_x \ol m, D_y \mu \right) \big) \\[1.5mm]
&+ \partial_t \ol m \mu +  \text{div}_x \big[ D_p H(D \ol u + D_y v,  \frac{x}{\ep}) \ol m \mu \big] + \text{div}_y [b^\ep_1 \ol m  \mu],
\end{split} 
\]
and 
\[
\begin{split}
B^\ep_3=& \partial_t \ol m \nu + \ol m \partial_t \nu + \Delta \ol m (\mu +\ep \nu) +\ol m \Delta_x(\mu + \ep \nu) +2\left( D_x\ol m, D_y \nu\right)  + 2 \left( D_x \ol m, D_x(\mu + \ep \nu)\right) \\[1.5mm]
& + \text{div}_x \big[ D_p H(D \ol u + D_y v,  \frac{x}{\ep})\ol m  \nu \big]  +  \text{div}_x [b^\ep_1 \ol m  \mu] +  \text{div}_y [b^\ep_1 \ol m \nu]
+ \text{div}_x [b^\ep_1 \ol m  \nu]   \\[2mm]
& +  \left(\text{div}_y [b^\ep_2 \ol m \mu] + \text{div}_x [b^\ep_2 \ol m  \mu]\right)
 + \ep \left(\text{div}_y [b^\ep_2 \ol m \nu] +\ep \text{div}_x [b^\ep_2 \ol m  \nu]\right).
\end{split}
\]

The choice of $\mu$ implies that $B^\ep_1=0$. Moreover, using Fredholm's alternative it is also possible to find a $\T^d$-periodic $\nu$ such that 
$B^\ep_2=0.$ Indeed, the choice of $\ol m$ and the fact that $\int_{\T^d} \mu =1$ imply that 
\[\int_{\T^d} \Big(\partial_t \mu + 2 \Delta_{xy} \mu + 2 \left( D_x \ol m, D_y \mu \right) 
+ \frac{1}{\ol m} \big[\partial_t \ol m \mu +  \text{div}_x \big[ D_p H(D \ol u + D_y v,  \frac{x}{\ep}) \ol m \mu \big] + \text{div}_y [b^\ep_1 \ol m  \mu]\big]\Big)=0. \]

Finally, it is immediate from the assumed regularity of $\ol m$ that $B^\ep_3\L^1(\R^d\times [0,T])$ with norm bounded independently of $\ep$, and, hence, there exists $C>0$ such that 
\beq\label{takis70}
\|\partial_t \wh m^\ep + \ep \Delta \wh m^\ep + \text{div} \big[ D_p H(D\wh u^\ep) \wh m^\ep]\|_{L^1(\R^d\times [0,T])}\leq C\ep.
\eeq

\smallskip

%
%

Next we compare $(u^\ep,  m^\ep)$ to $(\wh u^\ep, \wh m^\ep)$ using the typical in the mfg-theory argument consisting of writing the equations for $u^\ep-\wh u^\ep$ and $m^\ep-\wh m^\ep$. It follows that  

\beq\label{takis56}
\begin{cases}
\partial_t (u^\ep-\wh u^\ep) - \ep \Delta (u^\ep-\wh u^\ep)\\[2mm]
\hskip.5in  + H(Du^\ep, \frac{x}{\ep}) -F(\frac{x}{\ep}, m^\ep)-H(D\wh u^\ep,  \frac{x}{\ep}) + F(\frac{x}{\ep}, \wh m^\ep)= \text{O}_{L^\infty}(\ep) \ \text{in} \ \R^d\times (0,T], \\[2mm]
\partial_t (m^\ep -\wh m^\ep) + \ep \Delta (m^\ep-\wh m^\ep) +\\[2mm]
\hskip.5in  \text{div}\Big[D_p H(Du^\ep, \frac{x}{\ep}) m^\e) - D_p H(D\wh u^\ep, \frac{x}{\ep}) \wh m^\ep\Big]= \text{O}_{L^1}(\ep) \ \text{in} \ \R^d\times (0,T], \\[2mm]
(u^\ep-\wh u^\ep)(\cdot, 0)=0 \ \text{and} \ (m^\ep -\wh m^\ep)(\cdot, T)=0.
\end{cases}
\eeq

Multiplying the first equation of \eqref{takis56} by $m^\ep -\wh m^\ep$ and the second by $u^\ep-\wh u^\ep$, integrating over $\R^d\times[0,T]$, adding the two integrals and using the initial and terminal conditions in \eqref{takis56} yields
\beq\label{takis57}
\begin{split}
&\int_0^T\int_{\R^d} \Big[(m^\ep -\wh m^\ep)(H(Du^\ep, \frac{x}{\ep})- F(\frac{x}{\ep}, m^\ep)-H(D\wh u^\ep,  \frac{x}{\ep}) +F(\frac{x}{\ep}, \wh m^\ep))\\[2mm]
-&\left( D(u^ep-\wh u^\ep), D_p H(Du^\ep, \frac{x}{\ep}) m^\e) -D_p H(D\wh u^\ep, \frac{x}{\ep}) \wh m^\ep \right)\Big] dx dt  = \text{O}(\ep).
\end{split}
\eeq

It follows from the uniform convexity of $H$, the monotonicity with respect to $m$ and the assumed strict positivity of $\ol m$, which yields a positive and independent of $\ep$ lower bound for $\wh m^\ep$,  that
$\lim_{\ep\to 0} \|D(u^\ep-\wh u^\ep)\|_{L^2}$ and,  since $u^\ep-\wh u^\ep(\cdot, 0)$, that
$\lim_{\ep\to 0} \|u^\ep-\wh u^\ep\|_{H^1}$, and, then,   that $m^\ep-\wh m^\ep \to 0$ in $L^1$.

\end{proof}

We investigate next  whether  the limit system \eqref{mfg2} is of mfg-type or not, that is, if   \eqref{mfg10} holds. The answer is, in general, negative at least when dealing with potential mfg as we explain next.
\smallskip

We consider the mfg
\beq\label{potential0}
\begin{cases}
\partial_t u^\ep + \ep \Delta u^\ep + H(Du^\ep, \frac{x}{\ep})-F(\frac{x}{\ep}, m^\ep)=0 \ \text{in} \ \R^d \times [0,T),\\[1.5mm]
\partial_t m^\ep - \ep \Delta m^\ep + \text{div}\left[DH(Du^\ep,\frac{x}{\ep})m^\ep\right] \ \text{in} \ \R^d \times (0,T],\\[1.5mm]
u^\ep(\cdot,T)=u^\ep_T \ \text{and} \ m^\ep(\cdot, 0)=m_0,
\end{cases}
\eeq
and assume that 
\beq\label{potential1}
\text{there exists a $\T^d$-periodic and convex $\mathcal F:\R^d \times \R$ such that} \ F(y,m)=\frac{\delta \mathcal F}{\delta m}(y,m;
\eeq
notice that to be consistent with the classical stochastic control formulation in \eqref{potential0} we reversed the time.
\smallskip

It was shown in \cite{LL3} and Lions~\cite{LCdF} that 
\beq\label{potential2}
\begin{split}
u^\ep(x,0)=\inf\big\{\int_0^T\int_{\R^d}& \left[H^\star(a, \frac{x}{\ep}) m + \mathcal F(\frac{x}{\ep},m) \right] dx dt + \int_{\R^d}u_T(x) m(x,T) dx: \\[2mm]
&  \partial_t m-\ep \Delta m + \text{div}[am] =0 \ 
m(\cdot,0)=m_0\big\},
\end{split}
\eeq
where $H^\star$ is the convex conjugate of $H$.
\smallskip

The variational formula for $u^\ep$ is amenable to the $\Gamma$-convergence techniques, which yield a homogenized limit $\ol u$ for the $u^\ep$, which also satisfies, for an effective Hamiltonian $\ol H$,  the  variational formula
\beq\label{potential3}
\begin{split}
\ol u(x,0)=\inf\big\{\int_0^T\int_{\R^d} {\ol H}^\star(a,m) dx dt  &+ \int_{\R^d} u_T(x) m(x,T) dx: \\[2mm]
& \partial_t m-\Delta m + \text{div}[am] =0 \ 
m(\cdot,0)=m_0\big\},
\end{split}
\eeq

\smallskip

The limit $\ol u$ should also satisfy the first of the two equations of \eqref{mfg2} and, of course, the question is if \eqref{mfg10} holds or not. This is equivalent to checking if the limiting variational formula corresponds to a potential-type mfg.
\smallskip

It is known (see \cite{LCdF}), however, that for a mfg to have a potential formulation, the Hamiltonian must be of separated form, that is we must have that 
\[\ol H(p,m)= \ol H (p) + \ol F(m).\]
It is a classical fact that the $\Gamma$-limit of variational problems with Lagrangians corresponding to separated Hamiltonians does not give, in general,  rise to separated effective  Hamiltonians; see, for example, \cite{DM}.   Hence, the homogenized system is not, in general, of mfg-type.

\smallskip

The last item we discuss here is the rigorous convergence of the solution $(u^\ep, m^\ep)$ of \eqref{mfg1} when the initial and terminal conditions satisfy \eqref{well}. This is, however, a classical fact in the homogenization theory, where now the ansatz \eqref{ansatz} can be shown to hold rigorously as above,  given that, as follows form a simple calculation,   the solution of  homogenized system \eqref{mfg2} with initial and terminal conditions as in  \eqref{well} is given by 
\[\ol u(x,t)= \left( p, x\right) - t\ol H(p,1) \ \text{and} \ \ol m\equiv 1.\].

\bibliographystyle{amsplain}
\bibliography{mfg_homogenization.bib}

\providecommand{\bysame}{\leavevmode\hbox to3em{\hrulefill}\thinspace}
\providecommand{\MR}{\relax\ifhmode\unskip\space\fi MR }
\providecommand{\MRhref}[2]{%
  \href{http://www.ams.org/mathscinet-getitem?mr=#1}{#2}
}
\providecommand{\href}[2]{#2}
\begin{thebibliography}{10}

\bibitem{ABLLM}
Yves Achdou, Francisco~J. Buera, Jean-Michel Lasry, Pierre-Louis Lions, and
  Benjamin Moll, \emph{Partial differential equation models in macroeconomics},
  Philos. Trans. R. Soc. Lond. Ser. A Math. Phys. Eng. Sci. \textbf{372}
  (2014), no.~2028, 20130397, 19. \MR{3268061}

\bibitem{AGLL}
Yves Achdou, Pierre-Noel Giraud, Jean-Michel Lasry, and Pierre-Louis Lions,
  \emph{A long-term mathematical model for mining industries}, Appl. Math.
  Optim. \textbf{74} (2016), no.~3, 579--618. \MR{3575616}

\bibitem{A}
Gr{\'e}goire Allaire, \emph{Homogenization and two-scale convergence}, SIAM
  Journal on Mathematical Analysis \textbf{23} (1992), no.~6, 1482--1518.

\bibitem{CDLL}
Pierre {Cardaliaguet}, Francois {Delarue}, Jean-Michel {Lasry}, and
  Pierre-Louis {Lions}, \emph{{The master equation and the convergence problem
  in mean field games}}, ArXiv e-prints (2015).

\bibitem{CLLP1}
Pierre Cardaliaguet, Jean-Michel Lasry, Pierre-Louis Lions, and Alessio
  Porretta, \emph{Long time average of mean field games}, Netw. Heterog. Media
  \textbf{7} (2012), no.~2, 279--301. \MR{2928380}

\bibitem{CLLP2}
\bysame, \emph{Long time average of mean field games with a nonlocal coupling},
  Netw. Heterog. Media \textbf{7} (2012), no.~2, 279--301. \MR{2928380}

\bibitem{CD3}
Ren\'e Carmona and Fran\c{c}ois Delarue, \emph{Probabilistic analysis of
  mean-field games}, SIAM J. Control Optim. \textbf{51} (2013), no.~4,
  2705--2734. \MR{3072222}

\bibitem{CD1}
Rene Carmona and Francois Delarue, \emph{Probabilistic theory of mean field
  games with applications i}, 2018.

\bibitem{CD2}
\bysame, \emph{Probabilistic theory of mean field games with applications ii},
  2018.

\bibitem{CDM}
Annalisa Cesaroni, Nicolas Dirr, and Claudio Marchi, \emph{Homogenization of a
  mean field game system in the small noise limit}, SIAM Journal on
  Mathematical Analysis \textbf{48} (2016), no.~4, 2701--2729.

\bibitem{DM}
Gianni Dal~Maso, \emph{An introduction to $\gamma$-convergence}, vol.~8,
  Springer Science \& Business Media, 2012.

\bibitem{GLLM}
Xavier Gabaix, Jean-Michel Lasry, Pierre-Louis Lions, and Benjamin Moll,
  \emph{The dynamics of inequality}, Econometrica \textbf{84} (2016), no.~6,
  2071--2111, Appendix G with Zhaonan Qu, Appendix G appearing in the
  supplementary materials not found within the manuscript. \MR{3580263}

\bibitem{GP}
Thierry Goudon and Fr\'{e}d\'{e}ric Poupaud, \emph{Homogenization of transport
  equations: weak mean field approximation}, SIAM J. Math. Anal. \textbf{36}
  (2004/05), no.~3, 856--881. \MR{2111918}

\bibitem{GLL}
Olivier Gu\'eant, Jean-Michel Lasry, and Pierre-Louis Lions, \emph{Mean field
  games and applications}, Paris-{P}rinceton {L}ectures on {M}athematical
  {F}inance 2010, Lecture Notes in Math., vol. 2003, Springer, Berlin, 2011,
  pp.~205--266. \MR{2762362}

\bibitem{HCM1}
Minyi Huang, Peter~E. Caines, and Roland~P. Malham\'e, \emph{An invariance
  principle in large population stochastic dynamic games}, J. Syst. Sci.
  Complex. \textbf{20} (2007), no.~2, 162--172. \MR{2344101}

\bibitem{HCM3}
\bysame, \emph{Large-population cost-coupled {LQG} problems with nonuniform
  agents: individual-mass behavior and decentralized {$\epsilon$}-{N}ash
  equilibria}, IEEE Trans. Automat. Control \textbf{52} (2007), no.~9,
  1560--1571. \MR{2352434}

\bibitem{HCM2}
\bysame, \emph{The {NCE} (mean field) principle with locality dependent cost
  interactions}, IEEE Trans. Automat. Control \textbf{55} (2010), no.~12,
  2799--2805. \MR{2767144}

\bibitem{HMC1}
Minyi Huang, Roland~P. Malham\'e, and Peter~E. Caines, \emph{Nash equilibria
  for large-population linear stochastic systems of weakly coupled agents},
  Analysis, control and optimization of complex dynamic systems, GERAD 25th
  Anniv. Ser., vol.~4, Springer, New York, 2005, pp.~215--252. \MR{2143444}

\bibitem{HMC2}
\bysame, \emph{Large population stochastic dynamic games: closed-loop
  {M}c{K}ean-{V}lasov systems and the {N}ash certainty equivalence principle},
  Commun. Inf. Syst. \textbf{6} (2006), no.~3, 221--251. \MR{2346927}

\bibitem{LLLL}
Aim\'e Lachapelle, Jean-Michel Lasry, Charles-Albert Lehalle, and Pierre-Louis
  Lions, \emph{Efficiency of the price formation process in presence of high
  frequency participants: a mean field game analysis}, Math. Financ. Econ.
  \textbf{10} (2016), no.~3, 223--262. \MR{3500451}

\bibitem{LL1}
Jean-Michel Lasry and Pierre-Louis Lions, \emph{Jeux \`a champ moyen. {I}. {L}e
  cas stationnaire}, C. R. Math. Acad. Sci. Paris \textbf{343} (2006), no.~9,
  619--625. \MR{2269875}

\bibitem{LL2}
\bysame, \emph{Jeux \`a champ moyen. {II}. {H}orizon fini et contr\^ole
  optimal}, C. R. Math. Acad. Sci. Paris \textbf{343} (2006), no.~10, 679--684.
  \MR{2271747}

\bibitem{LL3}
\bysame, \emph{Mean field games}, Jpn. J. Math. \textbf{2} (2007), no.~1,
  229--260. \MR{2295621}

\bibitem{LCdF}
Pierre-Louis Lions, \emph{Mean field games}, Cours de Coll\`{e}ge de France.

\bibitem{LL4}
Pierre-Louis Lions and Jean-Michel Lasry, \emph{Large investor trading impacts
  on volatility}, Ann. Inst. H. Poincar\'e Anal. Non Lin\'eaire \textbf{24}
  (2007), no.~2, 311--323. \MR{2310697}

\bibitem{LPV}
Pierre-Louis Lions, George Papanicolaou, and S.R.S. Varadhan,
  \emph{Homogenization for hamilton-jacobi equations}, preprint.

\bibitem{LS10}
Pierre-Louis Lions and Panagiotis~E. Souganidis, \emph{Homogenization of
  degenerate second-order {PDE} in periodic and almost periodic environments
  and applications}, Ann. Inst. H. Poincar\'{e} Anal. Non Lin\'{e}aire
  \textbf{22} (2005), no.~5, 667--677. \MR{2171996}

\bibitem{N}
Gabriel Nguetseng, \emph{A general convergence result for a functional related
  to the theory of homogenization}, SIAM Journal on Mathematical Analysis
  \textbf{20} (1989), no.~3, 608--623.

\end{thebibliography}

\bigskip

\noindent ($^{1}$) Coll\`{e}ge de France,
11 Place Marcelin Berthelot, 75005 Paris, 
and  
CEREMADE, 
Universit\'e de Paris-Dauphine,
Place du Mar\'echal de Lattre de Tassigny,
75016 Paris, FRANCE\\ 
email: lions@ceremade.dauphine.fr
\\ \\
\noindent ($^{2}$) Department of Mathematics 
University of Chicago, 
5734 S. University Ave.,
Chicago, IL 60637, USA, \ \  
email: souganidis@math.uchicago.edu
\\ \\

%

\end{document}